\documentclass{article}
 
\usepackage{amsthm}
\usepackage{amsfonts}
\usepackage{amssymb}
\usepackage{verbatim}
\usepackage[all]{xy}
\usepackage{amsmath, amscd, amsthm}
\usepackage{longtable}
\usepackage{amscd}
\usepackage{mathrsfs}
\usepackage{graphicx}
\usepackage{latexsym}

\newtheorem{theorem}{Theorem}[section]
\newtheorem{lemma}[theorem]{Lemma}
\newtheorem{prop}[theorem]{Proposition}
\newtheorem{cor}[theorem]{Corollary}
\newtheorem{ex}[theorem]{Example}
\newtheorem{defn}[theorem]{Definition}
\newtheorem{remark}[theorem]{Remark}

\usepackage{latexsym}
\newcommand{\z}{{\mathbb Z}}
\newcommand{\gp}{\textrm{gp}}

\begin{document}

\title{Distortion in Free Nilpotent Groups}
\author{Tara Davis}

\maketitle

\begin{abstract}
We prove that a subgroup of a finitely generated free nilpotent group F is undistorted if and only if it is a retract of a subgroup of finite index in F. 
\end{abstract}

\let\thefootnote\relax\footnotetext{{\bf Keywords:} Subgroup Distortion; Free Nilpotent Group; Membership Problem.}
\let\thefootnote\relax\footnotetext{{\bf Mathematics Subject Classification 2000:} 20F65, 20F18, 20F05, 20F10.}

\section{Introduction}\label{intro}
\subsection{Background and Preliminaries}

The primary notion which will be investigated in this paper is that of distortion of a subgroup. This notion was first introduced by Gromov in \cite{gromov}.

\begin{defn} 
Let $H$ be a subgroup of a group $G$, where $H$ and $G$ are generated by the finite sets $S$ and $T$, respectively. Then the distortion function of $H$ in $G$ is defined as $\Delta^{G}_{H} : \mathbb{N} \rightarrow \mathbb{N} : n \mapsto \max \{ |w|_S : w \in H, |w|_T \leq n \}$, where $|w|_S$ denotes the word length of $w$ in $H$ with respect to the finite generating set $S$, and $|w|_T$ is defined similarly.
\end{defn}

We study distortion functions up to a natural equivalence relation in order to make this concept independent of the choice of finite generating sets $S$ and $T$. 

\begin{defn}\label{equiv}
We say that $f \preceq g$ if there exists $C>0$ such that $f(n) \leq Cg(Cn+C)$ for all $n \geq 0$. We say two functions are equivalent, written $f \approx g$, if $f \preceq g$ and $g \preceq f$. 
\end{defn}

Note that if $H$ is infinite, then it is always true that $\Delta_H^G(n) \succeq n$.

\begin{defn}
The subgroup $H$ of $G$ is said to be undistorted if $\Delta^{G}_{H}(n) \approx n$.
\end{defn}

If a subgroup $H$ is not undistorted, then it is said to be distorted, and it's distortion refers to the equivalence class of $\Delta_H^G(n)$. 

The following facts about distortion are well-known. If $[G:H] < \infty$ then $H$ is undistorted in $G$. This follows from the Reidemeister-Schreier rewriting procedure; the rewriting of an element in the finite set of generators of $H$ does not increase it's length. Moreover, if $H$ is a retraction of $G$, then $H$ must be undistorted. To see this, one takes a generating set of $H$ to be the images under the retract of a finite set of generators for $G$. Finally, if $M \leq H \leq G$ and both $M$ is undistorted in $H$ as well as $H$ is undistorted in $G$, then $M$ must also be undistorted in $G$; this follows from the definition of distortion. 

The distortion function measures the difference in the metrics induced by generators of $G$ and $H$. Intuitively, a subgroup $H$ of a group $G$ is highly distorted if one must travel a long distance in the Cayley graph of $H$ whereas traveling between the same points in $G$ takes a relatively short distance.

There has been a wide range of work done with regards to the study of distortion in finitely generated groups. For instance, the complete description of length functions on subgroups of finitely generated and finitely presented groups can be found in \cite{o} and \cite{os}. Other interesting finitely generated groups with fractional distortion are constructed in \cite{bridson}. Additionally, in \cite{osin}, the formula for distortion in finitely generated nilpotent groups and nilpotent Lie groups is obtained. 

We introduce the main object of our study as well as some of its basic properties.

\begin{defn}\label{fn}
A free $m$-generated class $c$ nilpotent group $G_{m,c}$ is a $c$-nilpotent group with generators $y_1, \dots y_m$ defined by the following universal property: given an arbitrary $d$-nilpotent group $H$ for $d \leq c$ and elements $h_1, \dots, h_m \in H$ there is a unique homomorphism $\phi : G_{m,c} \rightarrow H : y_i \mapsto h_i$ for all $i=1, \dots , m$.
\end{defn}

Note that free nilpotent groups are torsion-free. See, for example, \cite{baumslag}. We will provide a simple and concrete example of free nilpotent groups and some of their distorted subgroups, after fixing some notation. 

\vspace{.1in}

\begin{remark}\label{1.5}
{\it We use the notation that the commutator $[x_1,x_2]=x_1^{-1}x_2^{-1}x_1x_2$ and inductively define higher commutators by $[x_1, \dots, x_i]=[x_1,[x_2, \dots, x_i]],$ for $i \geq 3.$ The descending central series of a group $G$ is defined inductively as: $\gamma_1(G)=G$ and $\gamma_i(G)=[G,\gamma_{i-1}(G)]$. With this notation we have that the free nilpotent group $G_{n,c}$ has presentation given by $R/\gamma_{c+1}(R)$ where $R$ is the absolutely free group of rank $m$.}
\end{remark} 

\begin{ex}\label{1.6}
The free $2$-generated, $2$-nilpotent group is isomorphic to the $3$-dimensional integral Heisenberg group $$\mathcal{H}^3=\gp \langle a, b, c | c=[a,b], [a,c]=[b,c]=1 \rangle.$$ It has cyclic subgroup $\langle c \rangle_{\infty}$ which is distorted and in fact it has quadratic distortion. To see why this is true, notice that the word $c^{n^2}$ has quadratic length in $\langle c \rangle$, but that in $\mathcal{H}^3$, we have $$c^{n^2} = [a,b]^{n^2} = [a^n,b^n]$$ which has at most linear length.
\end{ex}

\subsection{Statement of Main Results}
The main result of this note is the following. It will be proved in Section \ref{s3}.
 
\begin{theorem}\label{t1}
Let $F$ be a free $m$-generated, $c$-nilpotent group. A subgroup $H$ in $F$ is undistorted if and only if $H$ is a retract of a subgroup of finite index in $F$.
\end{theorem}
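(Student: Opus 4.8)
This follows immediately from the three facts collected in the introduction. Suppose $H$ is a retract of a finite-index subgroup $K$ of $F$, say via a retraction $\rho\colon K \to H$. Since $H$ is a retract of $K$ it is undistorted in $K$, and since $[F:K]<\infty$ the subgroup $K$ is undistorted in $F$. Applying transitivity of undistortion to the chain $H \le K \le F$ yields that $H$ is undistorted in $F$.

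\textbf{Strategy for the ``only if'' direction.} The plan is to convert the metric hypothesis into an algebraic statement about how $H$ meets the lower central series of $F$, and then to use that statement to build the retraction. The claim I would isolate is the following: \emph{if $H$ is undistorted in $F$, then for every $i$ the subgroup $\gamma_i(H)$ has finite index in $H \cap \gamma_i(F)$}. Geometrically this says that the filtration of $H$ by the terms $H \cap \gamma_i(F)$ agrees, up to finite index, with the intrinsic lower central series of $H$. The necessity of this condition is exactly the mechanism behind Example~\ref{1.6}: there $H=\langle c\rangle$ satisfies $H \cap \gamma_2(\mathcal{H}^3)=\langle c\rangle$ while $\gamma_2(H)=1$, and the ``extra'' central element $c$, being expressible as a short commutator $[a^n,b^n]$, is precisely what forces quadratic distortion. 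In general, an element of $H \cap \gamma_i(F)$ lying outside every finite-index subgroup of $\gamma_i(H)$ can be realized by weight-$i$ commutators of short elements of $F$, and hence should have length of order $n^{1/i}$ in $F$ but order $n$ in $H$; a careful version of this estimate, using Osin's description of distortion in nilpotent groups from \cite{osin} together with a Mal'cev-coordinate computation, should establish the claim.

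\textbf{Building the retraction.} Granting the claim, I would first normalize coordinates. Let $r$ be the rank of the image $\overline{H}$ of $H$ in the abelianization $F^{\mathrm{ab}}\cong \z^m$. By the stacked-basis theorem, choose a free generating set $x_1,\dots,x_m$ of $F$ and positive integers $d_1,\dots,d_r$ so that $\overline{H}=\langle d_1\overline{x_1},\dots,d_r\overline{x_r}\rangle$, and pick lifts $h_1,\dots,h_r \in H$ with $h_i \equiv x_i^{d_i} \pmod{\gamma_2(F)}$. I would then argue by induction on the nilpotency class $c$. Writing $Z=\gamma_c(F)$ for the central last term, the claim guarantees that $H \cap Z$ is, up to finite index, a direct factor of $Z$, which disposes of the top layer. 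Passing to $F/Z$, which is free nilpotent of class $c-1$, and after checking that the image of $H$ remains undistorted there, the induction provides a retraction of a finite-index subgroup of $F/Z$ onto that image; the goal is then to lift this retraction through the central extension $1 \to Z \to F \to F/Z \to 1$, using the splitting of the top layer to prescribe the values on $Z$. Assembling these data over a finite-index subgroup $K \le F$ containing $H$ should produce the desired idempotent homomorphism $\rho\colon K \to H$.

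\textbf{Main obstacle.} I expect two points to carry essentially all the difficulty. The first is the length estimate underlying the italicized claim: turning the intuition ``deep commutators are short'' into a rigorous lower bound on distortion that detects any failure of $\gamma_i(H)$ to have finite index in $H \cap \gamma_i(F)$, uniformly in $i$. The second is the lifting step in the induction: a retraction of $F/Z$ onto the image of $H$ need not lift to a homomorphism on the nose, so one must exploit the freedom to pass to a finite-index subgroup and to adjust the map on the central layer $Z$, both to kill the resulting cohomological obstruction and to guarantee that the assembled map is genuinely idempotent with image \emph{exactly} $H$ rather than merely a finite-index subgroup of it. Verifying that these finite-index adjustments can be carried out simultaneously at every level of the series is the technical heart of the argument.
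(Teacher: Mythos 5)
Your ``if'' direction is complete and is the same one-line argument the paper uses. The ``only if'' direction, however, is a research plan rather than a proof, and the places you flag as ``main obstacles'' are precisely where all the mathematical content lives; none of them is resolved. Concretely: (i) the italicized claim (that $\gamma_i(H)$ has finite index in $H\cap\gamma_i(F)$ for every $i$) is only asserted to follow from ``a careful version of this estimate''--- it does in fact follow from Proposition \ref{osin} together with Lemma \ref{t3}, but you do not carry this out; (ii) the inductive lifting of a retraction through the central extension $1\to\gamma_c(F)\to F\to F/\gamma_c(F)\to 1$ is exactly the step where a proof could fail. A retraction of a finite-index subgroup of $F/\gamma_c(F)$ onto the image of $H$ need not lift, the cohomological obstruction you mention is not shown to vanish after passing to finite index, and you give no mechanism forcing the lifted idempotent to have image exactly $H$; (iii) the assertion that the image of $H$ in $F/\gamma_c(F)$ remains undistorted is itself nontrivial (quotients do not obviously preserve undistortion) and is left as something ``to check.'' As written, the argument establishes nothing beyond the easy direction.

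It is worth contrasting this with the paper's route, which avoids induction on the class and any lifting of retractions entirely. One chooses $b_1,\dots,b_k\in H$ whose images freely generate $HF'/F'$, arranges the free generators so that $b_1,\dots,b_k,a_{k+1},\dots,a_m$ are independent modulo $F'$, and takes the single retraction $r:F\to D=\gp\langle a_1,\dots,a_k\rangle$ killing $a_{k+1},\dots,a_m$, with $N=\ker r$. Lemma \ref{c2} gives $[F:HN]<\infty$ unconditionally, and the heart of the proof (Lemmas \ref{c3} and \ref{c4}) shows that if $H\cap N\ne\{1\}$ then, via the centralizer structure of Lemma \ref{t5}, one finds $1\ne u\in H\cap N\cap\gamma_c(F)$ with $\langle u\rangle\cap\gamma_c(H)=\{1\}$ (using Proposition \ref{t6} to see that $\gp\langle b_1,\dots,b_k\rangle$ is free nilpotent and meets $N$ trivially), whence Proposition \ref{osin} forces distortion. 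Thus undistorted implies $H\cap N=\{1\}$, and $H$ is then a retract of the finite-index subgroup $HN$ simply via the quotient by the normal subgroup $N$. If you want to salvage your outline, the lesson is that the retraction should be built in one step from a well-chosen complement $N$, not assembled layer by layer up the lower central series.
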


When the undistorted subgroup $H$ is normal in $F$ we may further refine our classification.

\begin{cor}\label{b2}
Let $H$ be a nontrivial normal subgroup of the free $m$-generated, $c$-nilpotent group $F$, and assume that $c \geq 2$. Then $H$ is undistorted if and only if $[F:H]< \infty$.
\end{cor}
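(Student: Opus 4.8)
The plan is to prove the two implications separately, using Theorem \ref{t1} for the nontrivial direction. The backward implication is immediate from the preliminaries: if $[F:H]<\infty$ then $H$ is undistorted. For the forward implication, suppose $H$ is undistorted. By Theorem \ref{t1}, $H$ is a retract of some subgroup $K\le F$ with $[F:K]<\infty$; let $r:K\to H$ be the retraction and set $N=\ker r$. Since $H$ is normal in $F$ it is normal in $K$, and $N$ is normal in $K$ as a kernel; because $H\cap N=1$ and $HN=K$, this exhibits an internal direct product $K=H\times N$. As $F$ is torsion-free so is $N$, and the whole problem reduces to proving $N=1$, for then $H=K$ has finite index. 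I may assume $m\ge 2$, since for $m=1$ we have $F\cong\z$ and every nontrivial subgroup already has finite index.

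The key point is that elements of $H$ commute with elements of $N$, and in a free nilpotent group of class $c\ge 2$ commuting elements have proportional images in the abelianization. Concretely, I would use the standard identification $\gamma_2(F)/\gamma_3(F)\cong\Lambda^2(F^{ab})$, under which the commutator induces the wedge pairing $(\bar x,\bar y)\mapsto \bar x\wedge\bar y$ on $F^{ab}$; this is exactly where $c\ge 2$ enters, guaranteeing the pairing is nonzero. For $h\in H$ and $n\in N$ we have $[h,n]=1$, hence $\bar h\wedge\bar n=0$, so $\bar h$ and $\bar n$ are linearly dependent in $V:=F^{ab}\otimes\mathbb{Q}\cong\mathbb{Q}^m$. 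Writing $U_H,U_N\subseteq V$ for the $\mathbb{Q}$-spans of the images of $H$ and $N$, the relation $U_H\wedge U_N=0$ forces, since $m\ge 2$, one of $U_H,U_N$ to vanish: otherwise both would lie on a common line and could not span $V=U_H+U_N$, which they must because $\bar K$ has finite index in $F^{ab}$.

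To finish I would invoke the elementary fact that a subgroup of a finitely generated nilpotent group whose image in the abelianization has finite index is itself of finite index; this follows by induction on the class, the inductive step being that $c$-fold commutators of elements covering a finite-index subgroup of $F^{ab}$ generate a finite-index subgroup of $\gamma_c(F)$ (equivalently, $[F,F]\le\Phi(F)$). If $U_H=0$, then $U_N=V$, so $N$ has finite index and $H\cong K/N$ is finite, contradicting that the nontrivial group $H$ is torsion-free. Hence $U_N=0$ and $U_H=V$, so $H$ has finite index; then $N\cong K/H$ is finite and torsion-free, whence $N=1$ and $[F:H]=[F:K]<\infty$.

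I expect the main obstacle to be the case in which a direct factor has trivial image in the abelianization, i.e. lies in $\gamma_2(F)$: this is precisely where the finite-index-detection lemma and torsion-freeness must be combined, and where the hypothesis $c\ge 2$ is indispensable (it makes the wedge pairing nondegenerate, and fails for the free abelian groups $G_{m,1}$, for which the statement is indeed false). A more conceptual alternative would replace the last two steps by passing to the rational Mal'cev Lie algebra: finite index gives $K$ and $F$ the same completion, the free $c$-step nilpotent Lie algebra $\mathfrak{f}_{m,c}$, and $K=H\times N$ would display $\mathfrak{f}_{m,c}$ as a direct sum of two nonzero ideals; since generators of a nilpotent Lie algebra are detected in its abelianization and distinct free generators have nonzero bracket, $\mathfrak{f}_{m,c}$ is directly indecomposable for $c\ge 2$, yielding $N=1$ immediately.
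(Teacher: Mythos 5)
Your proof is correct, and both directions go through, but the forward implication takes a genuinely different route from the paper's. The paper does not invoke Theorem \ref{t1} as a black box; it stays inside the explicit construction of Section \ref{s3}: with $k=\mathrm{rank}(HF'/F')$, either $k=m$ and (essentially by Lemma \ref{t4}) $[F:H]<\infty$ at once, or $k<m$, in which case Corollary \ref{c1} gives $H\cap N=\{1\}$, normality of $H$ and of $N$ forces the concrete pair $b_1\in H$, $a_m\in N$ to commute, and Proposition \ref{t6} (elements independent modulo $F'$ generate a free $c$-nilpotent, hence nonabelian, subgroup when $c\ge 2$) yields the contradiction. You instead take an abstract complement $N=\ker r$ supplied by Theorem \ref{t1}, form the internal direct product $K=H\times N$, and detect the same obstruction through the identification $\gamma_2(F)/\gamma_3(F)\cong\Lambda^2(F^{ab})$: commuting elements have proportional images in the abelianization, so one of $U_H$, $U_N$ must vanish. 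Both arguments ultimately rest on the single fact that in a free nilpotent group of class $\ge 2$ elements independent modulo $F'$ cannot commute; the paper extracts this from Neumann's Proposition \ref{t6}, while you extract it from the Magnus--Witt description of $\gamma_2/\gamma_3$, which is standard and correct but is the one input not available inside the paper. Your version is slightly longer because you must also dispose of the degenerate case $U_H=0$ (i.e.\ $H\le F'$), which you do cleanly via torsion-freeness and the finite-index detection lemma, whereas the paper has already excluded $k=0$ in its standing Section \ref{s3} assumptions using Proposition \ref{osin}. In short: the paper's proof is shorter because it reuses its running notation and a ready-made pair of witnesses; yours is more self-contained at the level of the corollary's statement and makes the role of the hypothesis $c\ge 2$ (nondegeneracy of the wedge pairing) more transparent.
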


\section{Facts on Nilpotent Groups}

We record several well known facts about nilpotent and free nilpotent groups which will be used in the proof of Theorem \ref{t1}. For instance, nilpotent groups possess special commutator identities.

\begin{lemma}\label{t2}
If $G$ is $c$-nilpotent, then the following identities hold:
$$(1)  \hspace{.1in}    [x_1, \dots, yz, \dots, x_c]=[x_1, \dots, y, \dots, x_c][x_1, \dots, z, \dots, x_c].$$
$$(2)  \hspace{.1in}   [x_1^{n_1}, \dots, x_c^{n_c}]=[x_1, \dots, x_c]^{n_1 \cdots n_c} \textrm{ for } n_1, \dots, n_c \in \mathbb{Z}.$$ 
\end{lemma}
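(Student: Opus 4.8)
The plan is to reduce both identities to the single structural fact that, in a class-$c$ group $G$, every commutator of weight exactly $c$ lies in $\gamma_c(G)$ and is therefore central: indeed $[\gamma_c(G),G]=\gamma_{c+1}(G)=1$, so $\gamma_c(G)\subseteq Z(G)$. Throughout I would lean on the two identities valid in any group, namely $[ab,d]=[a,d]^b[b,d]$ and $[a,bd]=[a,d][a,b]^d$, where $u^v=v^{-1}uv$. The point of the centrality observation is that whenever a factor such as $[a,d]$ already has weight $c$, the conjugation correction $[a,d]^v$ collapses to $[a,d]$, and two such weight-$c$ factors commute.

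For identity (1) I would first treat the extreme slots. Writing $w=[x_1,\dots,x_{c-1}]$ (weight $c-1$), the last-slot case is $[w,yz]=[w,z][w,y]^z$; since $[w,y]$ and $[w,z]$ have weight $c$ they are central, so the conjugation disappears and the two factors commute, giving $[w,yz]=[w,y][w,z]$. The first-slot case is identical using $[yz,w']=[y,w']^z[z,w']$ with $w'=[x_2,\dots,x_c]$.

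For an interior slot the obstacle is that peeling off $x_1$ leaves a commutator of weight $c-1<c$, which is not multilinear on the nose, so a naive induction fails. I would fix this by proving the stronger statement that multiplicativity in any slot of a weight-$k$ commutator holds modulo $\gamma_{k+1}(G)$, by induction on $k$. In the inductive step, after expanding via the basic identities, every correction term is either a commutator of an element with something in $\gamma_k(G)$ (hence in $\gamma_{k+1}(G)$) or a commutator of two elements of $\gamma_k(G)$ (hence in $\gamma_{2k}(G)\subseteq\gamma_{k+1}(G)$); all such terms vanish modulo $\gamma_{k+1}(G)$. Verifying that each correction really lands in $\gamma_{k+1}(G)$ is the main technical content. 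Taking $k=c$ and using $\gamma_{c+1}(G)=1$ then yields (1) exactly, for every slot.

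Finally, (1) says the weight-$c$ commutator map $(a_1,\dots,a_c)\mapsto[a_1,\dots,a_c]$ is a homomorphism in each variable into the central (hence abelian) subgroup $\gamma_c(G)$. From multiplicativity one gets $[\dots,1,\dots]=1$ (it is idempotent) and $[\dots,a^{-1},\dots]=[\dots,a,\dots]^{-1}$, so $[\dots,a^{n},\dots]=[\dots,a,\dots]^{n}$ for every $n\in\mathbb{Z}$ in each slot. Applying this in all $c$ slots and collecting the exponents in the abelian group $\gamma_c(G)$ gives $[x_1^{n_1},\dots,x_c^{n_c}]=[x_1,\dots,x_c]^{n_1\cdots n_c}$, which is (2).
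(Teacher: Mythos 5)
Your proof is correct, and it is genuinely more self-contained than what the paper offers: the paper gives no argument at all for Lemma \ref{t2}, deferring instead to special cases in Hall's book with the remark that the general formulas ``can be easily obtained'' from them. Your route --- first observing that $\gamma_c(G)$ is central, then establishing the standard stronger fact that a weight-$k$ commutator is multiplicative in each slot modulo $\gamma_{k+1}(G)$ by induction on $k$, and finally specializing to $k=c$ where $\gamma_{c+1}(G)=1$ --- is the right way to make this airtight, and your accounting of the correction terms is essentially complete (one further case worth writing out is a conjugation by an element of $\gamma_{k-1}(G)$, which lands in $\gamma_{2k-1}(G)\subseteq\gamma_{k+1}(G)$ for $k\geq 2$). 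The one thing to fix is a bookkeeping mismatch with the paper's convention: Remark \ref{1.5} defines commutators right-normed, $[x_1,\dots,x_i]=[x_1,[x_2,\dots,x_i]]$, so the \emph{last} slot is the innermost one and $[x_1,\dots,x_{c-1},yz]$ is not $[w,yz]$ with $w=[x_1,\dots,x_{c-1}]$; your warm-up for the ``extreme slots'' implicitly uses left-normed bracketing. This costs you nothing, since your general modulo-$\gamma_{k+1}$ induction already covers every slot including both extremes, but you should either drop the warm-up or restate it for the paper's bracketing. The derivation of (2) from (1) by slotwise homomorphy into the central abelian subgroup $\gamma_c(G)$ is exactly right.
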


In \cite{hall} special cases of these facts are discussed. The formulas in Lemma \ref{t2} can be easily obtained from these special cases. 

The following Lemma is useful when proving that subgroups of nilpotent groups are of finite index.

\begin{lemma}\label{t3}
If $G$ is a finitely generated nilpotent group and $H \leq G$, then if some positive power of each element of a set of generators of $G$ lies in $H$, then $[G:H] < \infty$ and a positive power of every element of $G$ lies in $H$. 
\end{lemma}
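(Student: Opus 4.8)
The plan is to induct on the nilpotency class $c$ of $G$. Fix a finite generating set $g_1,\dots,g_m$ of $G$ together with positive integers $n_1,\dots,n_m$ satisfying $g_i^{n_i}\in H$. First I would record that the second conclusion is automatic once finite index is known: if $[G:H]<\infty$, then the right cosets $Hg, Hg^2, Hg^3,\dots$ cannot all be distinct, so $Hg^i=Hg^j$ for some $i<j$, whence $g^{\,j-i}\in H$. Thus the whole lemma reduces to proving $[G:H]<\infty$.

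For the base case $c=1$, the group $G$ is finitely generated abelian. Setting $K=\langle g_1^{n_1},\dots,g_m^{n_m}\rangle\le H$, the quotient $G/K$ is a finitely generated abelian group generated by the images of the $g_i$, each of which has finite order dividing $n_i$; a finitely generated abelian group generated by finitely many torsion elements is finite, so $[G:H]\le[G:K]<\infty$.

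For the inductive step, I would pass to the last term $Z=\gamma_c(G)$ of the lower central series, which is central, so that $\bar G=G/Z$ is nilpotent of class $c-1$. The images $\bar g_i$ generate $\bar G$ and satisfy $\bar g_i^{\,n_i}\in\bar H$, where $\bar H=HZ/Z$, so by induction $[\bar G:\bar H]=[G:HZ]<\infty$. By the tower law $[G:H]=[G:HZ]\,[HZ:H]$, and since $Z$ is normal one checks directly via the left cosets $zH\ (z\in Z)$ that $[HZ:H]=[Z:Z\cap H]$; so it remains to bound $[Z:Z\cap H]$. This is the crux. The subgroup $Z=\gamma_c(G)$ is generated by the finitely many weight-$c$ commutators $[g_{i_1},\dots,g_{i_c}]$ in the generators, and Lemma \ref{t2}(2) gives $[g_{i_1}^{n_{i_1}},\dots,g_{i_c}^{n_{i_c}}]=[g_{i_1},\dots,g_{i_c}]^{\,n_{i_1}\cdots n_{i_c}}$. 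The left-hand side is an iterated commutator of the elements $g_{i_j}^{n_{i_j}}\in H$, hence lies in $H$, so each commutator generator of $Z$ has a positive power in $Z\cap H$. Applying the base-case argument inside the finitely generated abelian group $Z$ yields $[Z:Z\cap H]<\infty$, which closes the induction.

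The main obstacle is exactly this crux step: converting a hypothesis that controls only powers of the generators $g_i$ into genuine control over the central subgroup $\gamma_c(G)$. This is where Lemma \ref{t2}(2) is indispensable, since it lets one absorb the exponents $n_{i_j}$ into a single commutator and thereby certify that a positive power of each commutator generator of $\gamma_c(G)$ lands in $H$. I would emphasize that $\gamma_c(G)$ already carries the explicit finite generating set of weight-$c$ commutators, so the finitely-generated-abelian argument applies to it directly, with no appeal to Noetherianity.
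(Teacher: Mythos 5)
Your proof is correct. Note, however, that the paper does not actually prove Lemma \ref{t3}; it simply cites Baumslag's \emph{Lecture Notes on Nilpotent Groups}, so there is no in-paper argument to compare against. Your induction on the nilpotency class is the standard route and all the steps check out: the reduction of the second conclusion to finite index via the pigeonhole on cosets $Hg^i$, the base case (a finitely generated abelian group generated by torsion elements is finite), the index factorization $[G:H]=[G:HZ][HZ:H]$ with $[HZ:H]=[Z:Z\cap H]$ (valid since $Z=\gamma_c(G)$ is central), and the crux step using Lemma \ref{t2}(2) to push a positive power of each weight-$c$ commutator into $H$. The one ingredient you use without justification is that $\gamma_c(G)$ is generated by the simple weight-$c$ commutators in the generators when $\gamma_{c+1}(G)=1$; this is a standard fact (it is in Hall's book, and the paper itself invokes it implicitly in the proof of Lemma \ref{c4}), but in a self-contained write-up it deserves an explicit citation. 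A cosmetic remark: the lemma as stated allows an arbitrary (possibly infinite) generating set, whereas you fix a finite one; your argument survives this relaxation essentially unchanged, since subgroups of finitely generated nilpotent groups are finitely generated and a finitely generated abelian group generated by torsion elements is still finite. What your write-up buys over the paper's treatment is a genuinely elementary, self-contained proof using only Lemma \ref{t2}, in place of an external reference.
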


A proof of this fact can be found in \cite{baumslag}.

\begin{lemma}\label{t4}
If $G$ is any finitely generated nilpotent group, and $H  \leq  G$ then $[G:HG'] < \infty$ implies $[G:H]<\infty$.
\end{lemma}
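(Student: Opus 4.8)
The plan is to induct on the nilpotency class $c$ of $G$. When $c=1$ the group is abelian, so $G'=1$, $HG'=H$, and the implication is trivial. For the inductive step I would set $Z=\gamma_c(G)$, which is central in $G$ and, since $c\geq 2$, is contained in $G'$. Passing to $\bar{G}=G/Z$ (which has class at most $c-1$) and writing $\bar{H}=HZ/Z$, one checks that $\bar{H}\,\bar{G}'=HG'/Z$, and since $Z\subseteq G'\subseteq HG'$ this gives $[\bar{G}:\bar{H}\,\bar{G}']=[G:HG']<\infty$. The inductive hypothesis then yields $[\bar{G}:\bar{H}]<\infty$, that is, $[G:HZ]<\infty$.

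It then remains to bridge from $HZ$ down to $H$, i.e. to show $[HZ:H]<\infty$. Since $Z$ is central, the second isomorphism theorem gives $HZ/H\cong Z/(Z\cap H)$, so it suffices to prove $[Z:Z\cap H]<\infty$. Now $Z=\gamma_c(G)$ is finitely generated and, being the last term of the lower central series, is generated by the simple weight-$c$ commutators $[g_{i_1},\dots,g_{i_c}]$ in the generators $g_1,\dots,g_m$ of $G$. As $Z$ is abelian, Lemma \ref{t3} (applied to $Z\cap H\leq Z$) reduces the claim to showing that a positive power of each such commutator lies in $Z\cap H$.

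This last point is the crux of the argument. Since $G/HG'$ is finite, for each $i$ there is $n_i>0$ with $g_i^{n_i}\in HG'$; write $g_i^{n_i}=h_i c_i$ with $h_i\in H$ and $c_i\in G'=\gamma_2(G)$. Using identity (2) of Lemma \ref{t2} I would rewrite $[g_{i_1}^{n_{i_1}},\dots,g_{i_c}^{n_{i_c}}]=[g_{i_1},\dots,g_{i_c}]^{n_{i_1}\cdots n_{i_c}}$ and also as $[h_1 c_1,\dots,h_c c_c]$, then expand each slot by the multilinear identity (1). Every term produced in which some factor $c_j$ survives contains an entry lying in $\gamma_2(G)$, hence is a commutator of weight at least $c+1$ and therefore vanishes in $\gamma_{c+1}(G)=1$; what remains is exactly $[h_1,\dots,h_c]\in H$. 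Thus a positive power of $[g_{i_1},\dots,g_{i_c}]$ equals $[h_1,\dots,h_c]$, which lies in $H$ and, being a $c$-fold commutator of elements of $G$, lies in $Z$ as well. Combining $[G:HZ]<\infty$ with $[HZ:H]=[Z:Z\cap H]<\infty$ finally gives $[G:H]<\infty$.

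I expect the weight-counting manipulation to be the main obstacle to pin down precisely: one must verify that expanding identity (1) in the presence of the elements $c_j\in\gamma_2(G)$ really does push every error term into $\gamma_{c+1}(G)$, which relies on the standard commutator bound $[\gamma_a(G),\gamma_b(G)]\subseteq\gamma_{a+b}(G)$ together with $G$ being nilpotent of class $c$. The remaining steps are bookkeeping with the isomorphism theorem and multiplicativity of indices.
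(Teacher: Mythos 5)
Your proof is correct. Note that the paper does not actually supply an argument for this lemma: it only remarks that a special case is proved in Hall's book (namely the classical fact that in a finitely generated nilpotent group $HG' = G$ forces $H = G$, i.e.\ $G' \leq \Phi(G)$) and that ``the more general result follows by a simple argument.'' You have instead given a complete, self-contained proof by induction on the nilpotency class, and every step checks out: the reduction $[\bar G : \bar H\bar G'] = [G:HG']$ is legitimate because $\gamma_c(G) \subseteq G'$ when $c \geq 2$; the second isomorphism theorem correctly reduces the bridge step to $[Z : Z\cap H] < \infty$; $\gamma_c(G)$ is indeed generated by the simple weight-$c$ commutators in the generators (since $\gamma_{c+1}(G)=1$), so Lemma \ref{t3} applies; and the multilinear expansion of $[h_1c_1,\dots,h_cc_c]$ via Lemma \ref{t2}(1) does kill every term containing some $c_j \in \gamma_2(G)$, by $[\gamma_a(G),\gamma_b(G)] \subseteq \gamma_{a+b}(G)$ --- moreover the surviving terms all lie in the central subgroup $\gamma_c(G)$, so the order in which the product is collected is immaterial. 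The one point worth stating explicitly is why $Z=\gamma_c(G)$ is finitely generated (it is generated by the finitely many simple commutators just mentioned), which Lemma \ref{t3} needs. Compared with the route the paper gestures at (bootstrapping from the Frattini-type special case), your induction on class is longer but entirely explicit and uses only the tools already recorded in the paper, which is arguably preferable for a reader who wants to verify the ``simple argument'' the author leaves unstated.
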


In \cite{hall}, a special case of this Lemma is proved. The more general result of Lemma \ref{t4} follows by a simple argument.

The following result of Magnus will help us in proving Theorem \ref{t1}.

\begin{prop}\label{magnus}
Let $R$ be an absolutely free group. For $1 \ne x \in R$, let the weight of x, $w(x)=m$, be the first natural number such that $x \in \gamma_m(R)$ but $x \notin \gamma_{m+1}(R)$. Then for nontrivial elements $x_1$ and $x_2$ having respective weights $\lambda_1$ and $\lambda_2$, we have that the weight of $x=[x_1,x_2]$ equals $\lambda_1+\lambda_2$ if $\lambda_1 \ne \lambda_2$. Moreover, $w(x) > \lambda_1+\lambda_2$ if and only if the subgroup generated by $x_1$ and $x_2$ is also generated by some $\overline{x_1}, \overline{x_2}$ with weights $\lambda_1$ and $\lambda_1+\mu$, respectively, where $\mu>0$ and in this case, the weight of $x$ is $2 \lambda_1+\mu$.
\end{prop}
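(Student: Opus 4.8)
The plan is to pass to the associated graded Lie ring of $R$ and translate the statement about weights of group commutators into a purely Lie-theoretic statement about brackets of homogeneous elements in a free Lie ring. By the Magnus--Witt theorem, the associated graded abelian group $L = \bigoplus_{n \geq 1} \gamma_n(R)/\gamma_{n+1}(R)$, with the bracket induced by the group commutator, is the free Lie ring over $\mathbb{Z}$ on $m$ generators; in particular it is torsion-free, so it embeds in $L \otimes \mathbb{Q}$. For $1 \neq x \in R$ of weight $\lambda$, write $\ell(x) \in \gamma_\lambda(R)/\gamma_{\lambda+1}(R)$ for its nonzero leading term. Directly from the definition of the bracket on $L$, for $x_1, x_2$ of weights $\lambda_1, \lambda_2$ one has $[x_1,x_2] \in \gamma_{\lambda_1+\lambda_2}(R)$ with class $[\ell(x_1),\ell(x_2)]$ in $\gamma_{\lambda_1+\lambda_2}(R)/\gamma_{\lambda_1+\lambda_2+1}(R)$. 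Hence $w([x_1,x_2]) = \lambda_1+\lambda_2$ exactly when $[\ell(x_1),\ell(x_2)] \neq 0$, and $w([x_1,x_2]) > \lambda_1+\lambda_2$ exactly when $[\ell(x_1),\ell(x_2)] = 0$. The whole proposition is thereby reduced to deciding when the bracket of two nonzero homogeneous elements of $L$ vanishes.

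The key input I would establish is: for nonzero homogeneous $u,v \in L$, one has $[u,v]=0$ if and only if $u,v$ are $\mathbb{Q}$-linearly dependent (which forces $\deg u = \deg v$). Equivalently, a free Lie ring contains no two-dimensional abelian subalgebra. This is the crux and the step I expect to be the main obstacle. The cleanest route passes through the universal enveloping algebra of $L \otimes \mathbb{Q}$, which by Poincar\'e--Birkhoff--Witt and freeness is the free associative algebra $\mathbb{Q}\langle X_1,\dots,X_m\rangle$ (equivalently, use the Magnus embedding $y_i \mapsto 1+X_i$ into formal power series, under which $\ell(x)$ is the lowest-degree nonconstant term of $x$). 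If $[u,v]=0$ then $u$ and $v$ commute in this free associative algebra, so by Bergman's centralizer theorem they lie in a common polynomial subalgebra $\mathbb{Q}[t]$; but the only homogeneous Lie (primitive) elements of $\mathbb{Q}[t]$ are the scalar multiples of one homogeneous generator, whence $u,v$ are proportional and of equal degree. Granting this, the first assertion is immediate: if $\lambda_1 \neq \lambda_2$ then $\ell(x_1),\ell(x_2)$ have distinct degrees, hence are independent, hence $[\ell(x_1),\ell(x_2)] \neq 0$, giving $w([x_1,x_2]) = \lambda_1+\lambda_2$.

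For the ``moreover'', suppose $w(x) > \lambda_1+\lambda_2$. The key input forces $\lambda_1 = \lambda_2$ and $\ell(x_1) = a e$, $\ell(x_2) = b e$ for a common primitive $e$ of degree $\lambda_1$ and integers $a,b$. Since subgroups of free groups are free, $\langle x_1,x_2\rangle$ is free of rank $\leq 2$, and I may change generators by Nielsen transformations $x_i \mapsto x_i x_j^{\pm 1}$; when the two generators have equal weight $\lambda_1$ such a move sends the pair of leading terms to an integer combination, the weight remaining $\lambda_1$ unless the combination vanishes, in which case it jumps. Running the Euclidean algorithm on $(a,b)$ yields a generating pair $\overline{x_1}, \overline{x_2}$ of $\langle x_1,x_2\rangle$ with $\ell(\overline{x_1}) = \gcd(a,b)\,e$ of weight $\lambda_1$ and $w(\overline{x_2}) = \lambda_1+\mu$ for some $\mu > 0$. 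As these weights differ, the first assertion gives $w([\overline{x_1},\overline{x_2}]) = 2\lambda_1+\mu$. It remains to check $w(x) = w([\overline{x_1},\overline{x_2}])$: in the free group $\langle x_1,x_2\rangle$ the degree-two part of its own lower central quotients is infinite cyclic on the class of $[x_1,x_2]$, so the unimodular change of generators carries $[x_1,x_2]$ to $[\overline{x_1},\overline{x_2}]$ up to inversion and an error lying in $\gamma_3(\langle x_1,x_2\rangle)$, which sits in a term $\gamma_{k}(R)$ with $k > \lambda_1+\lambda_2$ and so does not affect the leading term. The converse follows from the same leading-term analysis: if $\langle x_1,x_2\rangle$ has generators of weights $\lambda_1$ and $\lambda_1+\mu$ with $\mu>0$, then (taking exponent sums modulo $\gamma_{\lambda_1+1}(R)$) every weight-$\lambda_1$ element of the subgroup has leading term a multiple of the single primitive $e$, so $\ell(x_1),\ell(x_2)$ are dependent and $w(x) > \lambda_1+\lambda_2$.

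Throughout I would keep the tacit hypothesis $[x_1,x_2] \neq 1$, equivalent to $\langle x_1,x_2\rangle$ having genuine rank $2$; the degenerate rank-one case is excluded since then $x$ is trivial and has no weight. The Euclidean-algorithm bookkeeping and the final weight-preservation check are elementary but fiddly, whereas the genuine mathematical content, and the step on which I would spend the most care, is the structural fact that commuting elements of a free Lie ring are linearly dependent.
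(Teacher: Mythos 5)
First, a point of comparison: the paper does not prove Proposition \ref{magnus} at all --- it is quoted from Magnus's 1937 paper \cite{magnus} --- so there is no in-paper argument to measure you against. Your strategy (pass to the associated graded Lie ring $\bigoplus_n \gamma_n(R)/\gamma_{n+1}(R)$, note that the class of $[x_1,x_2]$ in degree $\lambda_1+\lambda_2$ is the bracket of leading terms, and reduce everything to the fact that two nonzero homogeneous elements of a free Lie ring commute only if they are $\mathbb{Q}$-linearly dependent) is the standard modern route and is sound. For the key lemma you could bypass Bergman's centralizer theorem, whose application here still leaves you to argue about which elements of $\mathbb{Q}[t]$ are homogeneous Lie elements: by the Shirshov--Witt theorem the subalgebra of $L\otimes\mathbb{Q}$ generated by $u$ and $v$ is free, and an abelian free Lie algebra has rank at most one, which gives dependence immediately. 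The Nielsen-transformation/Euclidean-algorithm production of $\overline{x_1},\overline{x_2}$ and the converse leading-term analysis are fine.

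There is, however, one genuine quantitative gap, in the verification that $w([x_1,x_2])=w([\overline{x_1},\overline{x_2}])=2\lambda_1+\mu$. You write $[x_1,x_2]=[\overline{x_1},\overline{x_2}]^{\pm1}z$ with $z\in\gamma_3(K)$, where $K=\langle x_1,x_2\rangle$, and dismiss $z$ because it lies in some $\gamma_k(R)$ with $k>\lambda_1+\lambda_2=2\lambda_1$. That only shows $z$ does not disturb degree $2\lambda_1$; to conclude that the leading term of $[x_1,x_2]$ equals that of $[\overline{x_1},\overline{x_2}]$ you need $z\in\gamma_{2\lambda_1+\mu+1}(R)$, and the naive estimate $\gamma_3(K)\subseteq\gamma_{3\lambda_1}(R)$ (every element of $K$ has weight at least $\lambda_1$) fails to give this whenever $\mu\ge\lambda_1$. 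The gap is repairable within your framework: $\gamma_2(K)$ is the normal closure in $K$ of $[\overline{x_1},\overline{x_2}]$, whose weight is exactly $2\lambda_1+\mu$ and is preserved under conjugation since each $\gamma_n(R)$ is normal, so $\gamma_2(K)\subseteq\gamma_{2\lambda_1+\mu}(R)$, and hence $\gamma_3(K)=[K,\gamma_2(K)]\subseteq\gamma_{3\lambda_1+\mu}(R)\subseteq\gamma_{2\lambda_1+\mu+1}(R)$. With that sharper containment your argument closes; as written, the justification given for the final equality is insufficient.
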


A proof of Proposition \ref{magnus} can be found in \cite{magnus}.

\begin{lemma}\label{t5}
If $c > 1$ and $F$ is free $c$-nilpotent, then the centralizer of an element $x_1 \notin F'$ is of the form $\gamma_c(F) \times \langle a \rangle$, where $a \notin F'$.
\end{lemma}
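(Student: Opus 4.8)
The plan is to analyze $C:=C_F(x_1)$ through the associated graded Lie ring of the lower central series of $F$. First I note the easy inclusion: since $c\ge 2$ we have $\gamma_c(F)\le Z(F)\le C$, so $\gamma_c(F)$ sits inside every centralizer. Write $\pi:F\to F/F'\cong\mathbb{Z}^m$ for the abelianization and set $v:=\pi(x_1)\ne 0$ (this is exactly where the hypothesis $x_1\notin F'$ enters). Recall that for a free nilpotent group the quotients $L_i:=\gamma_i(F)/\gamma_{i+1}(F)$ are free abelian and that $L:=\bigoplus_{i=1}^{c}L_i$, equipped with the bracket induced by the group commutator, is the free nilpotent Lie ring of rank $m$ and class $c$ (Magnus--Witt; see \cite{magnus}, \cite{baumslag}). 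The whole argument rests on locating the \emph{leading term} of an arbitrary element of $C$ inside this Lie ring.

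Next I would run the following leading-term argument. Let $1\ne g\in C$ and let $j$ be the depth of $g$, i.e. $g\in\gamma_j(F)\setminus\gamma_{j+1}(F)$, with leading term $0\ne\bar g\in L_j$. If $j=c$ then $g\in\gamma_c(F)$ and we are done, so assume $j\le c-1$. Because $[x_1,g]=1$, its image in $L_{j+1}=\gamma_{j+1}(F)/\gamma_{j+2}(F)$ vanishes, and that image is precisely the induced Lie bracket $[\bar x_1,\bar g]$, where $\bar x_1=v\in L_1$. Thus $[\bar x_1,\bar g]=0$. The key input (below) is that in the free Lie algebra $L\otimes\mathbb{Q}$ the centralizer of the nonzero degree-one element $\bar x_1$ is exactly $\mathbb{Q}\bar x_1$; hence $\bar g\in\mathbb{Q}\bar x_1$. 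Since $\mathbb{Q}\bar x_1$ lives in degree one, this forces $j=1$: there is no element of $C$ of depth $2\le j\le c-1$, which gives $C\cap F'=\gamma_c(F)$. In the surviving case $j=1$ the same statement says $\pi(g)\in\mathbb{Q}v\cap\mathbb{Z}^m$, so $\pi(C)$ is contained in the infinite cyclic group $\mathbb{Q}v\cap\mathbb{Z}^m$ and, since it contains $v\ne 0$, is itself infinite cyclic.

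The hard part is establishing that fact about free Lie algebras, and I would prove it via the universal enveloping algebra. Extend $v$ to a $\mathbb{Q}$-basis of $L_1\otimes\mathbb{Q}$; then $U(L\otimes\mathbb{Q})$ is the free associative $\mathbb{Q}$-algebra $A$ on that basis, and $L\otimes\mathbb{Q}\hookrightarrow A$ as the space of primitive elements. If $\bar g\in L\otimes\mathbb{Q}$ satisfies $[v,\bar g]=0$, then $v\bar g=\bar g v$ in $A$, so $\bar g$ centralizes the free generator $v$; the centralizer of a single free generator in a free associative algebra is the polynomial ring $\mathbb{Q}[v]$. Finally $\mathbb{Q}[v]\cap(L\otimes\mathbb{Q})=\mathbb{Q}v$, because among $1,v,v^2,\dots$ only $v$ is primitive. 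This yields $\bar g\in\mathbb{Q}v$, as needed. Truncation to class $c$ is harmless, since the components $L_{j+1}$ with $j+1\le c$ coincide with those of the full free Lie ring, so the bracket relation may be read there.

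Finally I would assemble the centralizer. Choose $a\in C$ with $\pi(a)$ a generator of the infinite cyclic group $\pi(C)$; then $a\notin F'$. For any $g\in C$ we have $\pi(g)=n\,\pi(a)$ for some $n\in\mathbb{Z}$, whence $\pi(ga^{-n})=0$, so $ga^{-n}\in C\cap F'=\gamma_c(F)$ and $g\in\gamma_c(F)\langle a\rangle$; thus $C=\gamma_c(F)\langle a\rangle$. This product is direct: $\gamma_c(F)$ is central so the factors commute, and $\langle a\rangle\cap\gamma_c(F)=1$ because $a^n\in\gamma_c(F)\le F'$ forces $n\,\pi(a)=0$, hence $n=0$. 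Therefore $C=\gamma_c(F)\times\langle a\rangle$ with $a\notin F'$, as claimed. Everything except the free-Lie-algebra centralizer statement is bookkeeping; that statement is the crux, as it is what converts the single relation $[x_1,g]=1$ into rigid control of the leading term of $g$.
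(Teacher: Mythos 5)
Your proposal is correct, but it reaches the conclusion by a genuinely different route from the paper. The paper works entirely inside the absolutely free group $R$ and invokes Magnus's weight theorem (Proposition \ref{magnus}) for the commutator $[x_1,x_2]$: the dichotomy there forces any $x_2$ commuting with $x_1$ either into $\gamma_c(R)$ or into the isolator of $\langle x_1\rangle$, and the centralizer is then assembled as $\langle y\rangle\times\gamma_c(F)$ with $y$ generating that isolator. You instead pass to the associated graded Lie ring $L=\bigoplus_i\gamma_i(F)/\gamma_{i+1}(F)$, reduce the single group relation $[x_1,g]=1$ to the vanishing of a homogeneous Lie bracket $[\bar x_1,\bar g]$, and then use the fact that in a free Lie algebra over $\mathbb{Q}$ the centralizer of a nonzero degree-one element is one-dimensional --- proved via the enveloping algebra, where it becomes the statement that the centralizer of a free generator of a free associative algebra is $\mathbb{Q}[v]$, intersected with the primitives. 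The two approaches encode the same underlying information (Magnus's 1937 theorem is itself a free-Lie-algebra statement), but yours buys a sharper intermediate conclusion, namely $C_F(x_1)\cap F'=\gamma_c(F)$ exactly and $\pi(C_F(x_1))$ infinite cyclic, from which the direct-product decomposition is pure bookkeeping; the paper's version leans on the precise casework in Proposition \ref{magnus} and on the isolator of $\langle x_1\rangle$. The only places where you defer to standard facts without proof are the Magnus--Witt identification of $L$ with the free nilpotent Lie ring, the centralizer of a generator in a free associative algebra (Cohn/Bergman, or an elementary cyclic-shift argument on homogeneous words), and Friedrichs' characterization of primitives; all are correct and no heavier than the citation of \cite{magnus} that the paper relies on. One small point worth making explicit: you need $L_j$ torsion-free so that $\bar g\ne 0$ survives tensoring with $\mathbb{Q}$, and you need $v\ne 0$ to be extendable to a $\mathbb{Q}$-basis of $L_1\otimes\mathbb{Q}$; you note both, and both are exactly where $x_1\notin F'$ and the freeness of $F$ enter.
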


\begin{proof}
Let $R$ be an absolutely free group with the same number of generators as $F$. As mentioned in Remark \ref{1.5}, we have that $F=R/\gamma_{c+1}(R)$. An element $x_2$ is contained in the centralizer of $x_1$ in $F$, $C_F(x_1)$, if and only if $x=[x_1,x_2] =1$ in $F$ if and only if $x \in \gamma_{c+1}(R)$. That is, if considered as words in the absolutely free group $R$, $w(x) \geq c+1$. If $w(x_2) =1$ then by Proposition \ref{magnus}, and with notation as in Proposition \ref{magnus}, $w(x) \geq c+1$ which is equivalent to saying that $2+ \mu \geq c+1$; i.e. $1+\mu \geq c$. This means that $\gp \langle x_1, x_2 \rangle = \gp \langle \overline{x_1}, \overline{x_2} \rangle$ where $w(\overline{x_1})=1$ and $w(\overline{x_2})=1+\mu \geq c$, which occurs if $\overline{x_2} \in \gamma_c(R)$. Observe that if $w(x_2) \ne 1$ then by Proposition \ref{magnus}, $w(x)=w(x_2)+1 \geq c+1$ hence $w(x_2) \geq c$ which implies that $x_2 \in \gamma_c(R)$. Therefore, we have that $x_2 \in \gp \langle \overline{x_1} \rangle \times \gamma_c(R)$, with the understanding that in case $w(x_2) \ne 1$ we take $\overline{x_1}=x_1$ and $\overline{x_2}=x_2$. 

Hence, the image $x_2\gamma_{c+1}(R)$ in $F$ belongs to $\langle \overline{x_1}\gamma_{c+1}(R) \rangle \times (\gamma_c(R)/\gamma_{c+1}(R))$. The product is direct: the intersection is trivial because $c>1$ implies that $\langle \overline{x_1} \rangle \cap \gamma_c(R) \subseteq \langle \overline{x_1} \rangle \cap \gamma_2(R) = \{1\}$ because $w(x_1)=1$. Let $\langle y \gamma_{c+1}(R) \rangle$ be the unique maximal cyclic subgroup of the free nilpotent group $F$ containing $x_1 \gamma_{c+1}(R)$. This subgroup is the isolator of the cyclic subgroup. We will show that $$\langle y\gamma_{c+1}(R) \rangle \times (\gamma_c(R)/\gamma_{c+1}(R))$$ is the centralizer of $x_1$.  One inclusion has already been shown. It suffices to observe that $y \in C_F(x_1)$. This follows because there exists $n \in \z$ with $y^n \gamma_{c+1}(R) = x_1 \gamma_{c+1}(R)$. 
\end{proof}

\begin{prop}\label{t6}
Let $F$ be a free $m$-generated, $c$-nilpotent group with free generators $a_1, \dots a_m$, for $c \geq 1$. Suppose $b_1, \dots b_k \in F$ are such that $\{ b_1F', \dots b_kF' \}$ is a linearly independent set in the free abelian group $F/F'$ then $K:= gp \langle b_1, \dots b_k \rangle$ is free $c$-nilpotent.
\end{prop}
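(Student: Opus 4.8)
The plan is to realize $K$ as a quotient of a free subgroup of the underlying absolutely free group and to show that the lower central filtration of $F$ cuts out on $K$ exactly the defining relations of the free $c$-nilpotent group. Write $F = R/\gamma_{c+1}(R)$ as in Remark~\ref{1.5}, with $R$ absolutely free of rank $m$, and let $\pi\colon R \to F$ be the quotient map. I would choose lifts $\tilde b_1, \dots, \tilde b_k \in R$ with $\pi(\tilde b_i) = b_i$, and set $S = \gp\langle \tilde b_1, \dots, \tilde b_k\rangle \le R$. Since $\gamma_{c+1}(R) \subseteq \gamma_2(R)$, the abelianizations agree, $F/F' \cong R/\gamma_2(R) \cong \z^m$, and the images of the $\tilde b_i$ in $R/\gamma_2(R)$ coincide with the given independent classes $b_iF'$. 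The first task is then to record that $S$ is free of rank $k$ on the $\tilde b_i$: being a subgroup of the free group $R$, $S$ is free of some rank $r$ by Nielsen--Schreier, and comparing the composite $S \to S^{\mathrm{ab}} \cong \z^r \to R/\gamma_2(R) \cong \z^m$ with the $\z$-independence of the $k$ classes $\tilde b_i\gamma_2(R)$ forces $k \le r$, while $r \le k$ since $S$ is $k$-generated; hence $r = k$ and the $\tilde b_i$ form a free basis of $S$.

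The heart of the argument is the identity $S \cap \gamma_{c+1}(R) = \gamma_{c+1}(S)$. Once this is established, the isomorphism theorem gives
\[
K = \pi(S) \cong S/\bigl(S \cap \gamma_{c+1}(R)\bigr) = S/\gamma_{c+1}(S),
\]
which, since $S$ is free of rank $k$, is by Remark~\ref{1.5} precisely the free $c$-nilpotent group $G_{k,c}$, proving the proposition. The inclusion $\gamma_{c+1}(S) \subseteq S \cap \gamma_{c+1}(R)$ is automatic, so everything reduces to the reverse inclusion, which I would prove by passing to associated graded Lie rings. By Magnus--Witt the graded ring $\mathcal L_R = \bigoplus_{i\ge 1}\gamma_i(R)/\gamma_{i+1}(R)$ is the free Lie $\z$-ring on the $m$ classes $a_j\gamma_2(R)$, and likewise $\mathcal L_S$ is the free Lie $\z$-ring on the $k$ classes $\tilde b_i\gamma_2(S)$; each homogeneous component of either is free abelian. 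The inclusion $S \hookrightarrow R$ induces a graded Lie homomorphism $\phi\colon \mathcal L_S \to \mathcal L_R$ which in degree one sends the basis $\tilde b_i\gamma_2(S)$ to the independent classes $\tilde b_i\gamma_2(R)$.

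I expect the injectivity of $\phi$ to be the main obstacle, and it is exactly here that the independence hypothesis is used. Over $\mathbb Q$ the degree-one map $\mathcal L_S\otimes\mathbb Q \to \mathcal L_R\otimes\mathbb Q$ is an injection of vector spaces, and via the Poincare--Birkhoff--Witt embedding of a free Lie algebra into its tensor algebra one checks that an injection on generators induces an injection of the free Lie $\mathbb Q$-algebras; since each component of $\mathcal L_S$ is torsion-free, injectivity of $\phi\otimes\mathbb Q$ then forces injectivity of $\phi$ itself. Granting this, $\phi$ carries the nonzero leading term of any $1\ne s\in S$, taken in $\gamma_i(S)/\gamma_{i+1}(S)$ with $i$ the $S$-weight $w_S(s)$ (well defined since the free group $S$ is residually nilpotent), to the class $s\gamma_{i+1}(R)$, which is therefore nonzero; combined with $\gamma_i(S)\subseteq\gamma_i(R)$ this yields $w_R(s) = w_S(s)$ for every $s \in S$. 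Consequently $s \in \gamma_{c+1}(R)$ if and only if $w_R(s) \ge c+1$ if and only if $w_S(s) \ge c+1$ if and only if $s \in \gamma_{c+1}(S)$, giving the reverse inclusion and completing the proof. A more elementary alternative to this identity would induct on $c$ using $F/\gamma_c(F) \cong G_{m,c-1}$ together with Proposition~\ref{magnus} to control the top-degree component, but the Lie-ring computation seems the most transparent.
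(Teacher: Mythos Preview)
The paper does not actually prove this proposition; it simply refers the reader to Neumann's \emph{Varieties of Groups}. Your proposal therefore supplies what the paper omits, and the argument you outline is correct. Lifting the $b_i$ to a free subgroup $S$ of the absolutely free group $R$, identifying $K$ with $S/(S\cap\gamma_{c+1}(R))$, and reducing everything to the equality $S\cap\gamma_{c+1}(R)=\gamma_{c+1}(S)$ is the standard route, and your verification of that equality via the Magnus--Witt isomorphism and injectivity of the induced map of free Lie rings is sound: injectivity in degree one over $\mathbb{Q}$, functoriality of the free Lie algebra together with the PBW embedding into the tensor algebra, and torsion-freeness of the graded pieces of $\mathcal L_S$ combine exactly as you describe to give $w_R(s)=w_S(s)$ for all $s\in S$. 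The one step worth stating more carefully is that the $\tilde b_i$ form a \emph{free basis} of $S$ (not merely that $S$ has rank $k$): this follows because their images already form a $\z$-basis of $S^{\mathrm{ab}}\cong\z^k$, and a generating set of a free group that maps to a basis of the abelianization is a free basis. With that in place your proof is complete and self-contained, whereas the paper relies on the classical reference.
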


For a proof of Proposition \ref{t6} refer to \cite{neumann}.

The following result of Osin will be very useful to us later on.

\begin{prop}\label{osin}
Let $G$ be a finitely generated nilpotent group, and $H$ a subgroup of $G$. Let $H^0$ be the collection of elements of $H$ having infinite order. For $m \in H^0$, let the weight of $m$ in $G$ be defined by $$v_G(m) = \max \{ k | \langle m \rangle \cap G_k \ne \{ 1 \} \}$$ and similarly for $v_H(m)$.
Then $$\Delta^{G}_{H}(n) \approx n^{r}$$ where $$r = \displaystyle\max_{m \in H^0} \frac{v_G(m)}{v_H(m)}.$$ \end{prop}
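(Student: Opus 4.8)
The plan is to reduce the proposition to a single asymptotic estimate for the word length of powers and then to treat the two inequalities $\Delta^G_H \preceq n^r$ and $\Delta^G_H \succeq n^r$ separately. The \emph{key lemma} I would establish first is that, for an infinite-order element $m$ of a finitely generated nilpotent group $G$,
$$|m^N|_G \approx N^{1/v_G(m)}.$$
To prove it I would fix a Mal'cev basis $a_1, \dots, a_s$ of $G$ subordinate to the lower central series, each $a_i$ carrying a weight $d_i$ with $a_i \in \gamma_{d_i}(G)$, so that every element has a unique normal form $a_1^{t_1}\cdots a_s^{t_s}$. Writing $d = v_G(m)$, the upper bound $|m^N|_G \preceq N^{1/d}$ is the constructive half: a nonzero power $m^p$ lies in $\gamma_d(G)$, and the power identity of Lemma \ref{t2}(2), namely $[x_1^t, x_2, \dots, x_d] = [x_1, \dots, x_d]^t$, lets one realise such a deep power as a product of boundedly many commutators whose entries have length $O(N^{1/d})$. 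The matching lower bound follows from the polynomial volume growth of $G$ together with projection onto the quotients $G/\gamma_{k+1}(G)$, which shows that a coordinate of size $\sim N$ sitting at weight $d$ cannot be produced by a word shorter than $\sim N^{1/d}$.

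Granting the key lemma, the lower bound is immediate. Since weights lie in $\{1, \dots, c\}$, the ratio $v_G(m)/v_H(m)$ ranges over a finite set, so the maximum $r$ is attained by some $m \in H^0$. Applying the lemma inside $G$ and inside $H$ gives $|m^N|_G \approx N^{1/v_G(m)}$ and $|m^N|_H \approx N^{1/v_H(m)}$; setting $n \approx N^{1/v_G(m)}$, so that $N \approx n^{v_G(m)}$, yields $|m^N|_H \approx n^{v_G(m)/v_H(m)} = n^r$. As each $m^N$ lies in $H$ and has $G$-length at most $\approx n$, this gives $\Delta^G_H(n) \succeq n^r$.

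For the upper bound I would argue by induction on the nilpotency class $c$, the case $c=1$ (where $G$ is abelian and $r=1$) being trivial. Let $w \in H$ with $|w|_G \le n$. Passing to $\overline{G} = G/\gamma_c(G)$ and to the image $\overline{H}$ of $H$, the inductive hypothesis bounds the $\overline{H}$-length of $\overline{w}$ by $n^{\overline{r}}$ for the corresponding exponent $\overline{r}$ of the quotient, which one checks satisfies $\overline{r} \le r$; lifting a geodesic word back to $H$ writes $w = w_0 z$, where $w_0$ has $H$-length $\preceq n^{\overline{r}}$ and $z \in H \cap \gamma_c(G)$ is central in $G$. It then remains to bound $|z|_H$: one controls $|z|_G$ in terms of $n$, and since $z$ is central its $G$-length is governed by the key lemma with $v_G(z) = c$, so transporting the estimate into $H$ via the relation between $v_G$ and $v_H$ on central elements produces $|z|_H \preceq n^r$, whence $|w|_H \preceq n^r$.

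The main obstacle is exactly this last step, the control of the central correction term $z$, because the lower central series of $H$ is in general strictly finer than the filtration induced by restricting that of $G$: an element may be central (weight $c$) in $G$ while sitting at a much smaller weight in $H$, and it is precisely this discrepancy that the ratio $v_G/v_H$ records. The delicate points are to keep the implied multiplicative constants uniform across the induction, to ensure the lift $w_0$ does not inflate $|z|_G$ beyond $\approx n^{r}$, and to verify that no combination of coordinate contributions exceeds the single maximal exponent $r$. For the latter I would compare the Mal'cev normal form of $z$ in $H$ against its normal form in $G$, using the weight bookkeeping of Proposition \ref{magnus} together with the finite-index arguments of Lemmas \ref{t3} and \ref{t4}.
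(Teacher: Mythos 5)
First, note that the paper does not prove this proposition at all: it is quoted from Osin's article \cite{osin} and used as a black box, so there is no internal proof to compare your attempt against. Judged on its own terms, your outline gets the lower bound essentially right: the key lemma $|m^N|_G \approx N^{1/v_G(m)}$ is the correct engine, and deducing $\Delta^G_H(n) \succeq n^r$ by applying it once in $G$ and once in $H$ is exactly how one argues. (One caveat even here: Lemma \ref{t2}(2) as stated holds only for $c$-long commutators in a class-$c$ group, where everything in sight is central; for a $d$-fold commutator with $d<c$ the identity $[x^t,y,\dots]=[x,y,\dots]^t$ acquires error terms in $\gamma_{d+1}(G)$, and controlling those errors is where the real content of the upper half of the key lemma lies.)

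The genuine gap is in the upper bound $\Delta^G_H(n) \preceq n^r$. Your induction peels off $\gamma_c(G)$, writes $w=w_0z$ with $|w_0|_H \preceq n^{\overline{r}}$ and $z \in H\cap\gamma_c(G)$, and then proposes to bound $|z|_H$. But follow the quantifiers: the only bound available on $z$ is $|z|_G \le |w_0|_G+|w|_G \preceq n^{\overline{r}}$, so its weight-$c$ Mal'cev coordinates are only $O(n^{c\overline{r}})$; expressing $z$ in $H$ along a direction $e$ with $v_H(e)=d$ (and $v_G(e)=c$, since $e$ is central) then costs $|z|_H \approx n^{c\overline{r}/d}$. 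The definition of $r$ only guarantees $c/d = v_G(e)/v_H(e) \le r$, so this yields $|z|_H \preceq n^{\overline{r}\cdot r}$, which is $n^{r^2}$ rather than $n^r$ even granting your unproved claim that $\overline{r}\le r$ (itself not obvious, since $v_{\overline{H}}$ of an image need not compare cleanly with $v_H$ of a lift); iterating the induction through $c$ levels makes the exponent worse still. So the scheme as described does not close. Repairing it requires either choosing the lift $w_0$ so that its deep coordinates track those of $w$ to within $O(n^c)$ rather than $O(n^{c\overline{r}})$, or running the induction on a strictly stronger statement that controls all Mal'cev coordinates of $w$ simultaneously --- which is what Osin's actual proof does, and why it is considerably more technical than this sketch. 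You correctly flag this step as ``delicate,'' but the mechanism you propose for it does not produce the exponent $r$.
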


A proof of this fact can be found in \cite{osin}.

\begin{cor}
If $G$ is nilpotent of class $c$ and $H$ is cyclic, then $$\Delta^{G}_{H}(n) \approx n^{d}$$ where $d \in \mathbb{N}$ and $d \leq c$. 
\end{cor}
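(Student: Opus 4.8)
The plan is to apply Osin's formula (Proposition \ref{osin}) directly to the cyclic case and to observe that the exponent $r$ it produces is automatically a positive integer bounded by $c$. Throughout I identify the subgroups $G_k$ appearing in Proposition \ref{osin} with the lower central series terms $\gamma_k(G)$. I first dispose of the degenerate case: if $H$ is finite then its distortion function is bounded, corresponding to $d=0$. So I may assume $H=\langle h\rangle$ is infinite cyclic, in which case every nontrivial element of $H$ has infinite order and $H^0=H\setminus\{1\}$.

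The computation splits into evaluating $v_H$ and bounding $v_G$. For $v_H$: since $H$ is cyclic it is abelian, so $\gamma_2(H)=[H,H]=\{1\}$ and hence $\gamma_k(H)=\{1\}$ for all $k\geq 2$. Thus for $m\in H^0$ we have $\langle m\rangle\cap\gamma_1(H)=\langle m\rangle\neq\{1\}$ while $\langle m\rangle\cap\gamma_2(H)=\{1\}$, forcing $v_H(m)=1$ for every $m\in H^0$. For $v_G$: because $G$ is nilpotent of class $c$ we have $\gamma_{c+1}(G)=\{1\}$, so $\langle m\rangle\cap\gamma_k(G)=\{1\}$ for all $k\geq c+1$, giving $v_G(m)\leq c$; and since $\gamma_1(G)=G\supseteq\langle m\rangle$ we get $v_G(m)\geq 1$. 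Hence $v_G(m)\in\{1,\dots,c\}$ for each $m\in H^0$.

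Combining these, the exponent of Proposition \ref{osin} reduces to $r=\max_{m\in H^0} v_G(m)/v_H(m)=\max_{m\in H^0} v_G(m)$. Although $H^0$ is infinite, the values $v_G(m)$ all lie in the finite set $\{1,\dots,c\}$, so the maximum is attained and equals an integer $d$ with $1\leq d\leq c$; therefore $\Delta^{G}_{H}(n)\approx n^{d}$, as claimed. I do not expect a genuine obstacle here: the only points needing care are matching the notation $G_k$ with $\gamma_k(G)$, observing that $v_H(m)\equiv 1$ makes the ratio an integer, and noting that the weights $v_G(m)$ are bounded by $c$ so the defining maximum is attained. One may in fact check that $v_G(h^j)=v_G(h)$ for every $j\neq 0$, since a nonzero power of $h^j$ is a power of $h$ and conversely, so $v_G$ is constant on $H^0$ and $d=v_G(h)$; this is not needed for the statement but explains why no real maximization takes place.
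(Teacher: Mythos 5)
Your proposal is correct and is essentially the argument the paper intends: the corollary is stated as an immediate consequence of Proposition \ref{osin} (with $G_k=\gamma_k(G)$), and your computation that $v_H\equiv 1$ for an infinite cyclic subgroup while $1\leq v_G(m)\leq c$ is exactly the direct application that justifies it. The observation that $v_G$ is constant on $H^0$, so the maximum is trivially attained, is a nice touch but, as you note, not needed.
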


\section{Undistorted Subgroups in Free Nilpotent Groups}\label{s3}
From this point on, all notation is fixed. Let $F$ be a free $m$-generated, $c$-nilpotent  group with free generators $a_1, \dots, a_m$, for $c \geq 1$. Suppose that $H$ is any nontrivial subgroup of $F$. Consider the group $HF'/F'$. Being a subgroup of the free abelian group $F/F'$, it is free abelian itself. Denote the free generators of $HF'/F'$ by $b_1F', \dots b_kF'$, where each $b_i \in H$, so $k=\textrm{rank}(HF'/F')$. Without loss of generality, $k>0$, for if $k=0$ then $H \subset F'$ so by Proposition \ref{osin}, $H$ is a distorted subgroup in $F$. We can assume further that $b_1, \dots b_k, a_{k+1}, \dots a_m$ are independent modulo $F'$. 

Let $D=\textrm{gp} \langle a_1, \dots a_k \rangle$. Consider the map $r : F \rightarrow D:$ 

  \[
     r(a_i) = 
     \begin{cases}
	 a_i & \mathrm{if \ } i \leq k, \\
	 1 &\mathrm{if \ } i>k.
     \end{cases}
  \]

Then $r$ is a retraction of $F$. This is clear: $r$ is a homomorphism because $F$ is free, and $r$ restricted to $D$ is the identity map. Let $N = \ker(r)$. 

\begin{lemma}\label{c2}
We have $[F:HN] < \infty$.
\end{lemma}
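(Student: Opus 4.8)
The plan is to reduce the statement to a computation in the abelianization of $D$ and then invoke the two index lemmas already recorded. First I would observe that since $N=\ker(r)$ is normal and $r$ is a retraction of $F$ onto $D$, the product $HN$ is a subgroup of $F$, and the isomorphism $F/N \cong D$ induced by $r$ carries $HN/N$ onto $r(H)$ (because $r(N)=\{1\}$ and $r(H)$ consists precisely of the images of elements of $H$). Consequently
$[F:HN]=[F/N:HN/N]=[D:r(H)]$,
so it suffices to prove that the image $r(H)$ has finite index in the finitely generated nilpotent group $D=\gp\langle a_1,\dots,a_k\rangle$.

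To control $[D:r(H)]$ I would pass to the free abelian quotient $D/D'$, which has as basis the images of $a_1,\dots,a_k$. Writing each generator $b_i$ of $HF'/F'$ in $F/F'\cong\mathbb{Z}^m$ as $b_iF'=\sum_{j=1}^m e_{ij}\,a_jF'$, the retraction sends $r(b_i)$ to $\sum_{j=1}^k e_{ij}\,a_jD'$ in $D/D'$, since $r$ annihilates $a_{k+1},\dots,a_m$ and fixes $a_1,\dots,a_k$. Thus the images of $r(b_1),\dots,r(b_k)$ in $D/D'$ are described by the $k\times k$ integer matrix $E=(e_{ij})_{1\le i,j\le k}$.

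The key step is to show that $E$ is nonsingular, and this is exactly where the standing hypothesis that $b_1,\dots,b_k,a_{k+1},\dots,a_m$ are independent modulo $F'$ is used. These $m$ vectors in $F/F'\cong\mathbb{Z}^m$ form the rows of the block matrix $\left(\begin{smallmatrix} E & \ast \\ 0 & I_{m-k}\end{smallmatrix}\right)$, whose determinant equals $\det E$. Their independence forces this determinant to be nonzero, whence $\det E\ne 0$. Therefore $r(b_1),\dots,r(b_k)$ generate a subgroup of finite index $|\det E|$ in $D/D'\cong\mathbb{Z}^k$, and since these elements lie in $r(H)$, we conclude that $[D:r(H)D']<\infty$.

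Finally I would apply Lemma \ref{t4} to the finitely generated nilpotent group $D$ and its subgroup $r(H)$: from $[D:r(H)D']<\infty$ it follows that $[D:r(H)]<\infty$, and hence $[F:HN]=[D:r(H)]<\infty$, as required. The only genuine obstacle is the nonsingularity of $E$, which rests entirely on translating the independence hypothesis into a determinant condition via the triangular block structure above; the reduction to $[D:r(H)]$ and the final passage through Lemma \ref{t4} (and implicitly the spirit of Lemma \ref{t3}) are routine bookkeeping.
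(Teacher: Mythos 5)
Your proof is correct, and the engine is the same as the paper's: linear independence of $b_1,\dots,b_k,a_{k+1},\dots,a_m$ modulo $F'$ yields a finite-index statement in an abelianization, which Lemma \ref{t4} then upgrades to finite index in the nilpotent group itself. The packaging differs, though. The paper stays upstairs in $F$: it sets $S=\gp\langle b_1,\dots,b_k,a_{k+1},\dots,a_m\rangle$, notes that $SF'/F'$ is free abelian of rank $m$ inside $F/F'\cong\mathbb{Z}^m$ and hence of finite index, applies Lemma \ref{t4} to get $[F:S]<\infty$, and finishes by observing $S\le HN$. You instead quotient by $N$ first, identify $[F:HN]$ with $[D:r(H)]$, and prove finiteness of the latter via the nonvanishing of $\det E$ --- which is exactly the paper's independence hypothesis read off in coordinates through the triangular block structure. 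Your route costs slightly more bookkeeping (the identification $HN/N\cong r(H)$ and the explicit matrix) but buys the quantitative bound $[D:r(H)D']=|\det E|$ and isolates the subgroup $\gp\langle r(b_1),\dots,r(b_k)\rangle$ of $D$, which the paper needs again later (in the proof of Lemma \ref{c4} and in the virtual freeness corollary). One trivial slip: it is the \emph{images} of $r(b_1),\dots,r(b_k)$ in $D/D'$, not the elements themselves, that generate a finite-index subgroup of $D/D'$; this does not affect the argument.
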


\begin{proof}
The elements $b_1, \dots , b_k, a_{k+1}, \dots, a_m$ generate a subgroup $S$ of finite index in $F$. This follows because the elements $b_1, \dots , b_k, a_{k+1}, \dots, a_m$ are linearly independent modulo $F'$, so $SF'/F'=\gp\langle b_1F', \dots, b_kF',a_{k+1}F', \dots, a_mF', F'\rangle$ is free abelian of rank $m$ and is a subgroup of $F/F'$. Therefore we have that $[F/F':SF'/F']< \infty$, which implies that $[F:SF']< \infty$. Hence by Lemma \ref{t4}, we have that $[F:S]<\infty$. Because $N$ is generated by $a_{k+1}, \dots, a_m$ and $H$ contains $b_1, \dots, b_k$, then $HN$ contains $S$, so $[F:HN] <\infty$.

\end{proof}

The following Lemmas are working towards proving that for $H$ undistorted, $H \cap N =\{1\}$, which would essentially complete the proof of Theorem \ref{t1}.

\begin{lemma}\label{c3}
If $H \cap N \ne \{1\}$ then $N \cap H \cap \gamma_c(F) \ne \{1\}$.
\end{lemma}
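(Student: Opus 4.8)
The plan is to locate a nontrivial element of $H\cap N$ that is central in $H$, and then to use the explicit description of centralizers in Lemma \ref{t5} to show that any such element must in fact lie in $\gamma_c(F)$. If $c=1$ the statement is immediate, since then $\gamma_c(F)=F$ and $N\cap H\cap\gamma_c(F)=N\cap H\neq\{1\}$ by hypothesis, so I assume $c\geq 2$ throughout.

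First I would record two structural facts about $M:=H\cap N$. Since $N$ is normal in $F$, the intersection $M=H\cap N$ is normal in $H$. Moreover $M\subseteq F'$: passing to the free abelian group $F/F'$, the image of $H$ is $\langle b_1F',\dots,b_kF'\rangle$ and the image of $N$ is $\langle a_{k+1}F',\dots,a_mF'\rangle$, and these two subgroups intersect trivially because $b_1F',\dots,b_kF',a_{k+1}F',\dots,a_mF'$ were chosen to be linearly independent. Hence the image of $M$ in $F/F'$ is trivial, i.e. $M\subseteq F'$.

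Next I would produce a central element. As $M$ is a nontrivial normal subgroup of the finitely generated nilpotent group $H$, it meets the center: considering the descending chain $M\supseteq[M,H]\supseteq[M,H,H]\supseteq\cdots$, whose terms are normal in $H$ and eventually trivial because $H$ is nilpotent, the last nontrivial term lies in $M\cap Z(H)$. Thus I may fix $1\neq x\in M\cap Z(H)$. In particular $x$ commutes with $b_1$, and by construction $b_1\notin F'$, since $b_1F'$ is a nonzero element of $F/F'$. Now I would invoke Lemma \ref{t5}: since $c\geq 2$ and $b_1\notin F'$, the centralizer has the form $C_F(b_1)=\gamma_c(F)\times\langle a\rangle$ with $a\notin F'$. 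As $x\in C_F(b_1)$, write $x=za^{t}$ with $z\in\gamma_c(F)$ and $t\in\z$. But $x\in M\subseteq F'$ and $z\in\gamma_c(F)\subseteq F'$, so reducing modulo $F'$ yields $t\,(aF')=0$ in the torsion-free group $F/F'$; since $aF'\neq 0$ this forces $t=0$, whence $x=z\in\gamma_c(F)$. Therefore $x\in H\cap N\cap\gamma_c(F)$ is the desired nontrivial element.

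The one genuinely delicate point—and the reason the argument is arranged this way—is that the obvious strategy of raising the weight of an element of $M$ by commuting with the free generators of $F$ does not keep the result inside $H$; such commutators land in $N$ but generally escape $H$. The fix is to commute only within $H$, which is legitimate precisely because $M$ is normal in $H$, thereby arriving at an element of $Z(H)$, and then to let the centralizer structure of Lemma \ref{t5} carry out the jump all the way into $\gamma_c(F)$ in a single step. The linear independence of the $b_iF'$, which guarantees $b_1\notin F'$, is exactly what makes that final step available.
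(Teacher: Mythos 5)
Your proof is correct and follows essentially the same route as the paper's: reduce to $c\geq 2$, find a nontrivial element of $H\cap N$ in $Z(H)$ using normality of $H\cap N$ in the nilpotent group $H$, note $H\cap N\subseteq F'$ by the independence of the $b_iF'$ and $a_jF'$, and then use Lemma \ref{t5} applied to $C_F(b_1)$ to conclude the element lies in $\gamma_c(F)$. The only difference is that you spell out two steps the paper leaves implicit (the descending-chain argument for meeting the center, and the computation forcing $t=0$), which is fine.
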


\begin{proof}
Observe that the Lemma is true in case $c=1$, so in the proof we assume that $c \geq 2$. Because $H$ is nilpotent group, and $H \cap N$ is nontrivial normal subgroup, we must have $Z(H) \cap H \cap N \ne \{1\}$. Observe that $$Z(H) = (\displaystyle\cap_{h \in H}C_F(h)) \cap H \leq C_F(b_1) \cap H$$ which by Lemma \ref{t5} has the form $(\gamma_c(F) \times \langle a \rangle) \cap H$ where $a \notin F'$. Now observe that $H \cap N \leq F'$. This follows because the image of $H$ in $F/F'$ is generated by $\{b_1, \dots, b_k \}$ and the image of $N$ in $F/F'$ is generated by $\{a_{k+1}, \dots, a_m\}$, so because the set $\{b_1, \dots, b_k, a_{k+1}, \dots, a_m \}$ is independent, the intersection $HF'/F' \cap NF'/F' =1$, so $(H \cap N)F'/F'=1$ which implies that $H \cap N \subset F'$. Thus we have $$Z(H) \cap N \leq (\gamma_c(F) \times \langle a \rangle) \cap F' \leq \gamma_c(F).$$ Therefore, there is a nontrivial element in $Z(H) \cap N \cap \gamma_c(F)$ as required.
\end{proof}

\begin{lemma}\label{c4}
If $N \cap H \cap \gamma_c(F) \ne \{1\}$ then $H$ is distorted.
\end{lemma}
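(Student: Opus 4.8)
The plan is to invoke Osin's formula (Proposition \ref{osin}) and exhibit a single element of $H$ of infinite order whose weight in $F$ strictly exceeds its weight in $H$; this already forces the exponent $\rho$ in $\Delta^{F}_{H}(n) \approx n^{\rho}$ to satisfy $\rho > 1$, so $H$ is distorted. The natural candidate is a nontrivial $g \in N \cap H \cap \gamma_c(F)$, which exists by hypothesis. Since $\gamma_c(F)$ is free abelian, hence torsion-free, $g$ has infinite order, and because $\gamma_{c+1}(F) = \{1\}$ while $g \in \gamma_c(F) \setminus \{1\}$, we get $v_F(g) = c$. (When $c = 1$ one checks directly that $N \cap H = \{1\}$, so the hypothesis is vacuous and we may assume $c \geq 2$.) Everything then reduces to the key claim that $v_H(g) < c$, i.e. that no nonzero power of $g$ lies in $\gamma_c(H)$.

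To prove the claim I would first replace $\gamma_c(H)$ by a smaller, more rigid group. Using the multilinearity of weight-$c$ commutators modulo $\gamma_{c+1}(F) = \{1\}$ (Lemma \ref{t2}(1), together with the observation that an entry lying in $F'$ yields a commutator of weight $>c$, hence trivial), every generator $[h_1, \dots, h_c]$ of $\gamma_c(H)$ depends only on the classes $h_i F'$. As these classes are words in $b_1 F', \dots, b_k F'$, each such commutator already lies in $\gamma_c(K)$, where $K := \gp\langle b_1, \dots, b_k\rangle$. Thus $\gamma_c(H) \subseteq \gamma_c(K)$, and by Proposition \ref{t6} the group $K$ is free $c$-nilpotent of rank $k$.

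The heart of the argument is then to show that the retraction $r$ is injective on $\gamma_c(K)$. I would check first that $r(b_1), \dots, r(b_k)$ remain linearly independent modulo $D'$: writing $b_i F' = \sum_{\ell} c_{i\ell}\, a_\ell F'$, the element $r(b_i)$ has class $\sum_{\ell \leq k} c_{i\ell}\, a_\ell D'$ in $D/D'$, and any dependence among these classes would, after adding back the discarded $a_\ell$ $(\ell > k)$ terms, contradict the independence of $\{b_1 F', \dots, b_k F', a_{k+1} F', \dots, a_m F'\}$. Hence, by Proposition \ref{t6} applied inside $D$, the image $L := r(K) = \gp\langle r(b_1), \dots, r(b_k)\rangle$ is again free $c$-nilpotent of rank $k$. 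The restriction $r|_K : K \to L$ is thus a surjection between two free $c$-nilpotent groups of the same rank $k$; since finitely generated nilpotent groups are Hopfian, such a surjection is an isomorphism, so $r|_K$, and in particular its restriction to $\gamma_c(K)$, is injective.

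Finishing is then immediate: if $g^s \in \gamma_c(H) \subseteq \gamma_c(K)$ for some $s \neq 0$, then $r(g^s) = 1$ because $g \in N = \ker r$, and injectivity of $r$ on $\gamma_c(K)$ forces $g^s = 1$, contradicting that $g$ has infinite order. Hence $v_H(g) \leq c - 1$, so $v_F(g)/v_H(g) \geq c/(c-1) > 1$, and Proposition \ref{osin} gives $\Delta^{F}_{H}(n) \approx n^{\rho}$ with $\rho = \max_{m \in H^0} v_F(m)/v_H(m) > 1$. I expect the main obstacle to be this rigidity step: controlling $\gamma_c(H)$ finely enough to see that $g$, which genuinely involves the killed generators $a_{k+1}, \dots, a_m$, cannot be reproduced even up to powers by commutators in the $b_i$. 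Packaging that fact as injectivity of $r$ on $\gamma_c(K)$ via Proposition \ref{t6} and Hopficity is the cleanest route I see; the alternative of arguing directly in the free Lie algebra $\gamma_c(F) \otimes \mathbb{Q}$ works as well but requires more bookkeeping.
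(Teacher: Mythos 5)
Your proof is correct and follows essentially the same route as the paper's: reduce to showing that no nonzero power of $g$ lies in $\gamma_c(H)$, use multilinearity of $c$-fold commutators to push $\gamma_c(H)$ into $\gamma_c(K)$ for $K=\gp\langle b_1,\dots,b_k\rangle$, and use Proposition \ref{t6} to conclude that $r|_K$ is an isomorphism onto $r(K)$, so $K\cap N=\{1\}$, after which Proposition \ref{osin} finishes the argument. The only cosmetic differences are that you verify the independence of the $r(b_i)$ modulo $D'$ by a direct computation rather than via Lemmas \ref{c2} and \ref{t3}, and you make explicit the Hopficity step that the paper leaves implicit.
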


\begin{proof}
Let $1 \ne u \in N \cap H \cap \gamma_c(F)$. We will show that that $\langle u \rangle \cap \gamma_c(H) = \{1\}$. For if $u^r \in \gamma_c(H)$ for some $0 \ne r \in \z$, then $u^r$ is a product of $c$-long commutators of the from $[y_1, \dots, y_c]^{\pm1}$ where $y_i$ is either one of $b_1, \dots, b_k$ or an element of $F'$ since $H$ is generated by $b_1, \dots, b_k$ and $F' \cap H$. But if one of the $y_i$'s belongs to $F'$, then the commutator is trivial because it is a $c+1$-long commutator in $F$. It follows that $u^r \in \gp \langle b_1, \dots, b_k \rangle \cap N$. 

By Lemma \ref{c2}, the subgroup $S=\gp\langle b_1, \dots, b_k, a_{k+1} \dots, a_m\rangle$ has finite index in $F$. This implies by Lemma \ref{t3} that $$[r(F):r(S)]=[D:\gp \langle r(b_1), \dots, r(b_k)\rangle]<\infty.$$ Therefore, we also have that $$[D/D':\gp \langle r(b_1), \dots, r(b_k)\rangle D'/D']<\infty$$ and so $\{r(b_1)D', \dots, r(b_k)D'\}$ is linearly independent in the free abelian group of rank $k$, $D/D'$. By Proposition \ref{t6} we have that both $$\gp \langle r(b_1), \dots, r(b_k) \rangle \textrm{ and } \gp \langle b_1, \dots, b_k \rangle$$ are free $k$-generated, $c$-nilpotent groups. This implies that the intersection $\gp \langle b_1, \dots, b_k \rangle \cap N$ is trivial, because $N=\ker(r)$. 

Hence $\langle u \rangle \cap \gamma_c(H) = \{1 \}$ and $1 \ne u \in \gamma_c(F)$. It follows by Propsotion \ref{osin} that the distortion of the cyclic subgroup $\langle u \rangle$ in $F$ is greater than its distortion in $H$. Thus $H$ cannot be undistorted in $F$. 
\end{proof}

\begin{cor}\label{c1}
If $H \cap N \ne \{ 1 \}$, then $H$ is distorted.
\end{cor}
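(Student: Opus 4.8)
The plan is to obtain this corollary immediately by chaining together the two preceding lemmas, since together they exactly bridge the hypothesis $H \cap N \ne \{1\}$ to the conclusion that $H$ is distorted. There is essentially no new argument required: the corollary is the formal composition of Lemma \ref{c3} and Lemma \ref{c4}.

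First I would assume $H \cap N \ne \{1\}$ and apply Lemma \ref{c3}. This hypothesis yields that $N \cap H \cap \gamma_c(F) \ne \{1\}$; that is, there is a nontrivial element of $H$ lying simultaneously in the kernel $N$ of the retraction and in the last nontrivial term $\gamma_c(F)$ of the lower central series. The geometric content being invoked here is that a nontrivial normal subgroup of a nilpotent group meets the center, and the center of $H$ is forced into $\gamma_c(F)$ via the description of centralizers in Lemma \ref{t5} together with the observation $H \cap N \subseteq F'$.

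Next I would feed this conclusion directly into Lemma \ref{c4}, whose hypothesis is precisely $N \cap H \cap \gamma_c(F) \ne \{1\}$. That lemma produces an element $u$ which sits at maximal depth (weight $c$) in the lower central series of $F$, yet whose weight in $H$ is strictly smaller, so that by Osin's formula (Proposition \ref{osin}) the cyclic subgroup $\langle u \rangle$ is strictly more distorted in $F$ than in $H$, which prevents $H$ from being undistorted in $F$. Combining the two implications gives the statement.

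Since both implications are already established, there is no genuine obstacle remaining in proving the corollary itself. The real work lives inside Lemma \ref{c4}, where one must control the weight of $u$ in $H$ by using the freeness of $\gp\langle b_1,\dots,b_k\rangle$ (Proposition \ref{t6}) and the triviality of $\gp\langle b_1,\dots,b_k\rangle \cap N$; the corollary merely packages Lemmas \ref{c3} and \ref{c4} into the single clean statement needed for Theorem \ref{t1}.
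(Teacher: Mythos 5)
Your proposal is correct and matches the paper's proof exactly: the corollary is obtained by composing Lemma \ref{c3} with Lemma \ref{c4}, with no additional argument needed.
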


\begin{proof}
This follows directly from Lemmas \ref{c3} and \ref{c4}.
\end{proof}

Now we proceed with the proof of Theorem \ref{t1}.  
\begin{proof}
As mentioned in Section \ref{intro}, every retract of a subgroup having finite index in any group $G$ is undistorted. Conversely, if $H$ is undistorted in $F$ then by Corollary \ref{c1} we have that $H \cap N = \{ 1 \}$. Then by Lemma \ref{c2}, $H$ is a retract of the subgroup $HN$ of finite index in $F$, as required.
\end{proof}

We also proceed with the proof of Corollary \ref{b2}.
\begin{proof}
We use the notation already established in this Section. Observe that if $k=m$ then we have by definition of $k$ that $[F:H]<\infty$. If by way of contradiction we suppose that $k<m$, then by Corollary \ref{c1}, $H$ undistorted implies that $H \cap N =\{1\}$. It follows by the normality of $H$ and $N$ and the fact that $b_1 \in H$ and $a_m \in N$ that $[b_1,a_m]=1$. On the other hand, by Proposition \ref{t6}, we have that $\gp \langle b_1, a_m \rangle$ is free nilpotent of class at least $2$, a contradiction.
\end{proof}

\section{Examples and Discussion}

\begin{ex}\label{4.1}
In the formulation of Theorem \ref{t1}, one may not replace ``retract of a subgroup of finite index" by ``finite index subgroup in a retract", although this is true in some cases (e.g. $\langle a^2 \rangle$ in $\mathcal{H}^3$).

\noindent For a counterexample, consider the cyclic subgroup $H = \langle a^2[a,b]^3 \rangle$ of the free $2$-generated, $2$-nilpotent group $F = \langle a, b | [a,[a,b]]=[b,[a,b]] =1 \rangle$. Since no non-trivial power of the generator of $H$ is in $F'$, it follows that $H \cap F' = \{1\}$. Therefore, by Proposition \ref{osin}, $H$ is undistorted in $F$. By Theorem \ref{t1}, we know that $H$ is a retract of a subgroup of finite index in $F$. Following the steps of the proof, we arrive at the subgroup $M= \langle a^2[a,b]^3, b \rangle$. 

\noindent However, it should be remarked that $H$ is not a subgroup of finite index in a retraction of $F$. First, observe that $H$ is not a retraction itself. For, if by way of contradiction there were such a homomorphism $\phi : F \rightarrow H$, then we have equations $\phi(a)=(a^2[a,b]^3)^n$ and $\phi(b)=(a^2[a,b]^3)^m$ as well as $a^2[a,b]^3 = \phi(a)^2[\phi(a),\phi(b)]^3$. But this set of equations has no solutions, even modulo $F'$. Next, observe that $H$ is not a proper subgroup of finite index in any $K \leq F$. This follows because $H$ is a maximal cyclic subgroup in a torsion-free nilpotent group.
\end{ex}

\begin{ex}\label{4.2}
Freeness is necessary for the formulation. For instance, consider the case of non-free $5$-dimensional Heisenberg group $F=\mathcal{H}^5$ defined by the presentation $$\langle x,y,u,v,z | [x,y]=[u,v]=z,[x,z]=[y,z]=[u,z]=[v,z]=1 \rangle.$$ Then by Lemma \ref{osin}, $H=\mathcal{H}^3$ is an undistorted subgroup of $F$. However, as we will show, $H$ is not a retract of any subgroup $K$ of finite index in $F$.  For if by way of contradiction, $[F:K] < \infty$ and $H$ is a retract of $K$, then we would have that the Dehn functions $f_H \preceq f_K \approx f_F$ which implies that $n^3 \preceq n^2$. These facts about Dehn functions are well known and the reader may see \cite{allcock} or \cite{os2} for more information about the Dehn function of $\mathcal{H}^5$ and \cite{gersten} for more information on the Dehn function of $\mathcal{H}^3$. 
\end{ex}

The following result is a direct implication of the proof of Theorem \ref{t1}.

\begin{cor} \label{t7}
Every undistorted subgroup $H$ of $F$ is ``almost a retract" in the following sense: there exists a normal subgroup $N \leq F$ such that $HN$ is of finite index in $F$ and $H \cap N = \{ 1 \}$.
\end{cor}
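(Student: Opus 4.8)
The plan is to observe that this statement merely repackages intermediate results already assembled during the proof of Theorem \ref{t1}, so essentially no new argument is needed. First I would invoke the fixed notation of Section \ref{s3}: let $N = \ker(r)$ be the kernel of the retraction $r : F \rightarrow D$ onto $D = \gp \langle a_1, \dots, a_k \rangle$, where $b_1F', \dots, b_kF'$ are free generators of $HF'/F'$ and $b_1, \dots, b_k, a_{k+1}, \dots, a_m$ are chosen to be independent modulo $F'$. Since $N$ is the kernel of a homomorphism, it is automatically normal in $F$, which supplies the normality demanded by the statement.

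Next I would establish the two remaining properties separately. The finite-index condition $[F:HN] < \infty$ is exactly the content of Lemma \ref{c2}, which holds for every nontrivial subgroup $H$ regardless of its distortion. For the trivial intersection $H \cap N = \{1\}$, I would appeal to the contrapositive of Corollary \ref{c1}: that corollary asserts that if $H \cap N \ne \{1\}$ then $H$ is distorted, so under the hypothesis that $H$ is undistorted we must have $H \cap N = \{1\}$. Combining normality of $N$, the finite index of $HN$, and the trivial intersection yields exactly the desired ``almost a retract'' conclusion.

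The only point meriting any care is verifying that the single normal subgroup $N$ produced during the proof of Theorem \ref{t1} simultaneously witnesses all three conditions; but this is immediate, since it is this very $N$ to which both Lemma \ref{c2} and Corollary \ref{c1} refer. I therefore expect no genuine obstacle here: the corollary is a direct bookkeeping consequence, with the essential mathematical content residing entirely in Lemma \ref{c2} and Corollary \ref{c1}. If anything, the subtlety is purely expository, namely making explicit that the distinction between this statement and Theorem \ref{t1} is precisely that Theorem \ref{t1} additionally records that the inclusion $H \hookrightarrow HN$ splits via $r$, whereas Corollary \ref{t7} retains only the complementation data $H \cap N = \{1\}$ together with finite index.
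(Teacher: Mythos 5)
Your proposal is correct and matches the paper exactly: the paper states Corollary \ref{t7} as a direct implication of the proof of Theorem \ref{t1}, and the intended argument is precisely the one you give, namely taking $N = \ker(r)$ (normal as a kernel), citing Lemma \ref{c2} for $[F:HN] < \infty$, and the contrapositive of Corollary \ref{c1} for $H \cap N = \{1\}$.
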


\begin{cor}
The undistorted subgroup $H$ of $F$ is virtually free $c-$nilpotent.
\end{cor}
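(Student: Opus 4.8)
The plan is to exhibit an explicit finite-index subgroup of $H$ that is free $c$-nilpotent, namely the subgroup $K=\gp\langle b_1,\dots,b_k\rangle$ generated by the elements $b_i$ fixed at the start of this section. By Proposition \ref{t6}, $K$ is free $k$-generated $c$-nilpotent, because $\{b_1F',\dots,b_kF'\}$ is linearly independent in $F/F'$ by construction. Hence the entire content of the statement reduces to proving that $[H:K]<\infty$.

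First I would bring in the hypothesis that $H$ is undistorted. By Corollary \ref{c1} this forces $H\cap N=\{1\}$, so the restriction $r|_H:H\to D$ has trivial kernel and is therefore an isomorphism onto its image $r(H)\le D$. In particular $K\cap N\subseteq H\cap N=\{1\}$, so $r|_K:K\to r(K)$ is also injective, and under the isomorphism $r|_H$ the subgroup $K$ is carried precisely onto $r(K)=\gp\langle r(b_1),\dots,r(b_k)\rangle$. Consequently indices are preserved, giving $[H:K]=[r(H):r(K)]$.

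Next I would transfer the finite-index statement that is already available in $D$. In the proof of Lemma \ref{c4} it is shown that the finite-index subgroup $S=\gp\langle b_1,\dots,b_k,a_{k+1},\dots,a_m\rangle$ of $F$ maps onto a finite-index subgroup $r(S)$ of $r(F)=D$; since $r$ kills $a_{k+1},\dots,a_m$ we have $r(S)=\gp\langle r(b_1),\dots,r(b_k)\rangle=r(K)$, whence $[D:r(K)]<\infty$. Because $r(K)\le r(H)\le D$, a sandwiched subgroup has index at most that of the inner one, so $[r(H):r(K)]\le[D:r(K)]<\infty$. Combining this with the previous paragraph yields $[H:K]<\infty$, and as $K$ is free $c$-nilpotent this shows $H$ is virtually free $c$-nilpotent. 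The only genuinely substantive point is the injectivity of $r|_H$, which is exactly where undistortedness enters via $H\cap N=\{1\}$; once $H$ is identified isomorphically with a subgroup of $D$ lying above the finite-index free nilpotent subgroup $r(K)$, the remainder is the elementary index inequality. I therefore expect no serious obstacle beyond verifying carefully that $r|_H$ restricts to an isomorphism $K\to r(K)$.
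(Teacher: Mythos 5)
Your proposal is correct and follows essentially the same route as the paper: both arguments rest on $H\cap N=\{1\}$ making $r|_H$ an isomorphism, on Proposition \ref{t6} giving freeness of $\gp\langle b_1,\dots,b_k\rangle$ (equivalently of its image under $r$), and on the finite-index fact $[D:\gp\langle r(b_1),\dots,r(b_k)\rangle]<\infty$ from the proof of Lemma \ref{c4}. The only cosmetic difference is that you locate the free subgroup $K$ inside $H$ and transport the index computation over to $D$, whereas the paper works directly with $r(K)\le r(H)\le D$ and then identifies $r(H)$ with $H$.
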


\begin{proof}
With the notation of Section \ref{s3}, we have that $r(H)$ contains the free subgroup $K=\gp \langle r(b_1), \dots, r(b_k) \rangle$. Because $[D:K]< \infty$ and $K \leq r(H)$ it follows that $[r(H):K]< \infty$. Finally, because $H \cap N = \{1\}$ we have that $r(H) \cong H/(H \cap N) \cong H$. 
\end{proof}

\begin{ex}
There are undistorted subgroups of free nilpotent groups that are not free. For example, consider again the $3$-dimensional Heisenberg group $\mathcal{H}^3$ and its subgroup $H = \gp \langle a^2, b, [a,b] \rangle$. Then $H$ is undistorted because it is of finite index in $\mathcal{H}^3$. Moreover, $H$ is not free because $H' = \langle [a^2,b] \rangle$ and so $H/H'$ contains the nontrivial torsion element $[a,b]$. However, the group $H$ is virtually free, as it contains the free nilpotent subgroup $\langle a^2,b \rangle$ of finite index.
\end{ex}

\subsection*{Acknowledgements}
The author is grateful to Alexander Olshanskii for his support, encouragement and many valuable discussions.

\noindent Tara C. Davis\\
Department of Mathematics\\
Vanderbilt University\\
1326 Stevenson Center\\
Nashville, Tennessee 37240, United States of America\\
tara.c.davis@vanderbilt.edu\\

\end{document}